\theoremstyle{plain}
\newtheorem{proposition}{Proposition}[section]
\newtheorem{theorem}[proposition]{Theorem}
\newtheorem{corollary}[proposition]{Corollary}
\newtheorem{lemma}[proposition]{Lemma}
\theoremstyle{definition}
\newtheorem{definition}[proposition]{Definition}
\newtheorem{conjecture}[proposition]{Conjecture}
\theoremstyle{remark}
\newtheorem{remark}[proposition]{Remark}
\newcommand{\wt}{\widetilde}
\newcommand{\ol}{\overline}
\newcommand{\sm}{\setminus}
\newcommand{\Q}{\mathbb{Q}}
\newcommand{\Z}{\mathbb{Z}}
\newcommand{\Id}{\operatorname{Id}}
\newcommand{\Ext}{\operatorname{Ext}}
\newcommand{\Tor}{\operatorname{Tor}}
\newcommand{\Hom}{\operatorname{Hom}}
\newcommand{\ord}{\operatorname{ord}}
\renewcommand{\top}{\operatorname{top}}
\newcommand{\zzm}{\Z[\Z^m]}
\renewcommand{\epsilon}{\varepsilon}
\renewcommand{\phi}{\varphi}
\newcommand{\opnormal}[1]{\operatorname{\textnormal{#1}}}
\begin{document}

\title[Homotopy ribbon concordance]{Homotopy ribbon concordance and Alexander polynomials}

\author{Stefan Friedl}
\address{
  Department of Mathematics\\
  Universit\"{a}t Regensburg\\
  Germany
  }
\email{sfriedl@gmail.com}

\author{Mark Powell}
\address{
  Department of Mathematical Sciences\\
  Durham University\\
  UK
}
\email{mark.a.powell@durham.ac.uk}

%\date{\today}
%\makeatletter
%\@namedef{subjclassname@2010}{\textup{2010} Mathematics Subject Classification}
%\makeatother
%\subjclass[2010]{primary: , secondary: }
%\keywords{}

\def\subjclassname{\textup{2010} Mathematics Subject Classification}
\expandafter\let\csname subjclassname@1991\endcsname=\subjclassname
\expandafter\let\csname subjclassname@2000\endcsname=\subjclassname
\subjclass{%
 %57Q45,  %Knots and links in high dim
 57M25, % Knots and links in $S^3$
 57M27, % Invariants of knots and 3-manifolds
% 57N13, % Topology of 4-manifolds
 57N70, % Cobordism and concordance (in low dimension)
 % 57Q60; % Cobordism and concordance (in high dimension)
%  57M07, % Topological methods in group theory
%57R65, 57R67; % Surgery and Wall obstructions.
}
\keywords{Ribbon concordance, Alexander polynomial}

\begin{abstract}
We show that if a link $J$ in the 3-sphere is homotopy ribbon concordant to a link $L$ then the Alexander polynomial of $L$ divides the Alexander polynomial of $J$. % We generalise this to many twisted Alexander polynomials.
\end{abstract}

\maketitle

\section{Introduction}
Let $I:= [0,1]$.  An oriented, ordered $m$-component link $J$ in $S^3$ is \emph{homotopy ribbon concordant} to an oriented, ordered $m$-component link $L$ if there is a concordance $C \cong \bigsqcup^m S^1 \times I$, locally flatly embedded in $S^3 \times I$, restricting to $J \subset S^3 \times \{0\}$ and $-L \subset S^3 \times \{1\}$, such that the induced map on fundamental groups of exteriors
\[\pi_1(S^3\sm \nu J) \twoheadrightarrow \pi_1((S^3 \times I) \sm \nu C)\]
is surjective and the induced map
\[\pi_1(S^3\sm \nu L) \rightarrowtail \pi_1((S^3 \times I) \sm \nu C)\]
is injective. Here $\nu J$, $\nu L$, and  $\nu C$ denote open tubular neighbourhoods.   When $J$ is homotopy ribbon concordant to $L$  we write $J \geq_{\top} L$.  From now on we write
\[X_J := S^3\sm \nu J,\;\; X_L := S^3\sm \nu L, \text{ and } X_C := (S^3 \times I) \sm \nu C.\]

The notion of homotopy ribbon concordance is a natural homotopy group analogue of the notion of smooth ribbon concordance initially introduced by Gordon~\cite{Gordon-ribbon-conc} for knots:  we say the link $J$ is smoothly ribbon concordant to  the link $L$, written $J \geq_{\operatorname{sm}} L$, if there is a smooth concordance from $J$ to $L$ such that the restriction of the projection map $S^3 \times I \to I$ to $C$ yields a Morse function on $C$ without minima.  The exterior of such a concordance admits a handle decomposition relative to $X_J$ with only 2- and 3-handles, from which it is easy to see that the induced map $\pi_1(X_J) \to \pi_1(X_C)$ is surjective.  Gordon's argument~\cite[Lemma~3.1]{Gordon-ribbon-conc} shows that $\pi_1(X_L) \to \pi_1(X_C)$ is injective. Thus a smooth ribbon concordance is a homotopy ribbon concordance.
%\footnote{[SF] I looked at Gordon's proof, I think it generalized to links. It seems to me that he proves a stronger statement, if $Y=X$ with $n$ 1-handles and $n$ 2-handles attached and if $H_*(Y,X)=0$, then $\pi_1(X)\to \pi_1(Y)$ is a monomorphism. This works for any $X$ as long as $\pi_1(X)$ is residually finite. It's a clever little argument.}

We define the Alexander polynomial $\Delta_J(t_1,\dots,t_m) \in \Z[t_1^{\pm 1},\dots,t_m^{\pm 1}]$ of an oriented, ordered $m$-component link $J$ to be the order of the  \emph{torsion} submodule of the Alexander module $H_1(X_J;\Z[\Z^m])$. Here the precise coefficient system $\phi \colon \pi_1(X_J)\to \Z^m$ is determined by the oriented meridians and the ordering of~$L$.
%Equivalently this is the determinant of $tV-V^T$, for any Seifert matrix $V$ for $J$.
%The Alexander polynomial of a link $L$ can also be computed using generalised Seifert surfaces called $C$-complexes\footnote{cite CF.}.  Let $\phi \colon \pi_1(X_L)\to \Z^m$ be the abelianisation homomorphism followed by the identification with $\Z^m$ determined by the oriented meridians and the ordering of $L$.
%Then consider the first homology $H_1(X_L;\Z[\Z^m])$, the Alexander module of~$L$.
%The rank $\eta(L) := \dim \Q(\Z^m) \otimes_{\Z[\Z^m]} H_1(X_L;\Z[\Z^m])$ is called the \emph{nullity} of $L$, which is a concordance invariant.\footnote{citation for this.}
%The Alexander polynomial is
%\[\Delta_L(t_1,\dots,t_m) := \ord(TH_1(X_L;\Z[\Z^m])),\]
%the order of the torsion submodule of $H_1(X_L;\Z[\Z^m])$.
%Sometimes in the literature this is the torsion Alexander polynomial; for those authors, the Alexander polynomial is by definition zero whenever the nullity is nonzero.

\begin{theorem}\label{theorem:alexander-polynomials-divide}
  Suppose that $J \geq_{\operatorname{top}} L$. Then $\Delta_L \mid \Delta_J$.
\end{theorem}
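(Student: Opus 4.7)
The plan is to compare $\Delta_J$ and $\Delta_L$ through the intermediate module $H_1(X_C;\Lambda)$ with $\Lambda := \Z[\Z^m]$. The two hypotheses on $\pi_1$ are meant to squeeze $\operatorname{ord}(H_1(X_C;\Lambda)_{\text{tors}})$ between $\Delta_L$ and $\Delta_J$: surjectivity on $\pi_1$ from $X_J$ will give $\operatorname{ord}(H_1(X_C;\Lambda)_{\text{tors}}) \mid \Delta_J$, while injectivity on $\pi_1$ from $X_L$, combined with duality on $X_C$, will give $\Delta_L \mid \operatorname{ord}(H_1(X_C;\Lambda)_{\text{tors}})$. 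Before beginning, I would record that because each component of $C$ is an annulus, meridians of $J$ and of $L$ represent the same classes in $H_1(X_C;\Z)$; thus the coefficient system $\phi\colon\pi_1(X_C)\to\Z^m$ restricts to the standard abelianisation on both $X_J$ and $X_L$, and I can work consistently with $H_*(-;\Lambda)$ on all three spaces.

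For the upper bound I would apply the snake lemma to the commutative diagram of short exact sequences $1\to\pi_1(\wt X_?)\to\pi_1(X_?)\to\Z^m\to 1$ for $?\in\{J,C\}$: the vertical map on $\pi_1$ is surjective by hypothesis, while the vertical map on $\Z^m$ is the identity (meridians to meridians), so the induced map $\pi_1(\wt X_J)\twoheadrightarrow\pi_1(\wt X_C)$ is surjective, and abelianising yields $H_1(X_J;\Lambda)\twoheadrightarrow H_1(X_C;\Lambda)$. A standard rank calculation (both modules have $\Lambda$-rank $m-1$) shows that the kernel is $\Lambda$-torsion, and so $\operatorname{ord}(H_1(X_C;\Lambda)_{\text{tors}})$ divides $\Delta_J$.

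For the lower bound I would invoke Poincar\'e--Lefschetz duality on the compact topological $4$-manifold $X_C$, whose boundary decomposes as $X_J \cup Y \cup X_L$ with $Y := \partial\nu C \cap (S^3 \times I)$ a disjoint union of $m$ copies of $T^2 \times I$. Since $X_J \cup Y$ deformation retracts onto $X_J$, duality gives $H_2(X_C, X_L;\Lambda) \cong H^2(X_C, X_J;\Lambda)$. The vanishing $H_0(X_C, X_J;\Lambda) = H_1(X_C, X_J;\Lambda) = 0$ from the first step, together with the universal coefficient spectral sequence $E_2^{p,q} = \Ext_\Lambda^p(H_q(X_C, X_J;\Lambda),\Lambda) \Rightarrow H^{p+q}(X_C, X_J;\Lambda)$, forces $H^2(X_C, X_J;\Lambda)$ to coincide with $\Hom_\Lambda(H_2(X_C, X_J;\Lambda),\Lambda)$, which is $\Lambda$-torsion-free.

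The hard part will be converting this torsion-freeness of $H_2(X_C, X_L;\Lambda)$ into the injectivity $H_1(X_L;\Lambda)_{\text{tors}} \hookrightarrow H_1(X_C;\Lambda)$ via the long exact sequence of $(X_C, X_L)$; having a torsion-free source for the connecting map $\partial$ does not by itself prevent the image of $\partial$ from meeting the torsion submodule of $H_1(X_L;\Lambda)$. I anticipate closing this gap either by showing that $H_2(X_C, X_J;\Lambda)$ is in fact $\Lambda$-torsion -- forcing $H_2(X_C, X_L;\Lambda) = 0$ and thus rendering $\iota_{L,*}$ outright injective -- via a further duality comparison between $H_2(X_C;\Lambda)$ and $H_2(X_J;\Lambda)$ on the $3$-manifold $X_J$, or via a separate algebraic lemma showing directly that the image of $\partial$ is saturated in $H_1(X_L;\Lambda)$. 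Once the injectivity on torsion is secured, the chain $\Delta_L \mid \operatorname{ord}(H_1(X_C;\Lambda)_{\text{tors}}) \mid \Delta_J$ delivers the theorem.
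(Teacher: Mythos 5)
Your overall architecture is the paper's: you sandwich $\ord TH_1(X_C;\zzm)$ between $\Delta_L$ and $\Delta_J$, get surjectivity of $H_1(X_J;\zzm)\to H_1(X_C;\zzm)$ from the group-theoretic diagram, and attack injectivity on the $L$ side via Poincar\'e--Lefschetz duality $H_2(X_C,X_L;\zzm)\cong H^2(X_C,X_J;\zzm)$ and the universal coefficient spectral sequence. However, there are two genuine gaps, and both trace back to one missing ingredient that the paper supplies: the inclusions $X_J\hookrightarrow X_C$ and $X_L\hookrightarrow X_C$ are $\Z$-homology equivalences, and by chain homotopy lifting (\cite[Proposition~2.10]{Cochran-Orr-Teichner:1999-1}) this gives $H_*(X_C,X_J;\Q(\Z^m))=0=H_*(X_C,X_L;\Q(\Z^m))$; equivalently, all the relative homology modules with $\zzm$-coefficients are $\zzm$-torsion. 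Nothing in your write-up establishes or even invokes this.

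Concretely: for the upper bound, a surjection of $\zzm$-modules does not by itself give divisibility of torsion orders (consider $\zzm\twoheadrightarrow\zzm/(p)$); you need the kernel to be torsion, and your justification --- ``both modules have $\Lambda$-rank $m-1$'' --- is false. The rank of a link's Alexander module is not determined by $m$: the Hopf link exterior has $H_1(X;\Z[\Z^2])=0$ (rank $0$), while the two-component unlink has rank $1$. The correct route is that $H_1(X_J;\Q(\Z^m))\to H_1(X_C;\Q(\Z^m))$ is an isomorphism, which follows from the ingredient above and yields surjectivity on torsion submodules (this is Proposition~\ref{prop:surjection-torsion}). For the lower bound you have honestly flagged the problem yourself: torsion-freeness of $H_2(X_C,X_L;\zzm)$ does not give injectivity of $H_1(X_L;\zzm)\to H_1(X_C;\zzm)$, even on torsion, and neither of your proposed fixes is carried out. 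The same missing ingredient closes this gap: since $H_2(X_C,X_J;\zzm)$ is torsion, $\Hom_{\zzm}(H_2(X_C,X_J;\zzm),\zzm)=0$, so your spectral sequence computation upgrades to $H^2(X_C,X_J;\zzm)=0$, hence $H_2(X_C,X_L;\zzm)=0$ and the long exact sequence of $(X_C,X_L)$ gives outright injectivity (Proposition~\ref{prop:alex-polys-divide-injective}). As written, then, the proposal is a correct skeleton with the two load-bearing steps unproved; adding the $\Z$-homology equivalence of the inclusions plus the passage to $\Q(\Z^m)$-coefficients repairs both at once.
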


For knots and for $\geq_{\operatorname{sm}}$ instead of $\geq_{\operatorname{top}}$, Theorem~\ref{theorem:alexander-polynomials-divide} is a consequence of a more general theorem of Gilmer~\cite{Gilmer-ribbon-conc}. However Gilmer's proof does not extend to the topological category.

Further classical work on smooth ribbon concordance includes~\cite{Miya-ribbon-conc},\cite{Gilmer-ribbon-conc}, \cite{Miya-ribbon-conc-2}, and \cite{Silver-ribbon-conc}.

We want to explain a fairly simple proof of Theorem~\ref{theorem:alexander-polynomials-divide}, thus we will not prove the most general result possible.  But we expect that our argument can be generalised to twisted Alexander polynomials~\cite{Kirk-Livingston:1999-2, Kirk-Livingston:1999-3, HKL08} and higher order Alexander polynomials~\cite{Cochran:2002-1}, provided one uses a unitary representation that extends over the ribbon concordance exterior.
Our proof can also be generalised to concordances between links in homology spheres.  Having not found a convincing application, we have not carried out either of these generalisations in this short note.

%In order to demonstrate that our argument generalises to new situations beyond the classical case of knots in $S^3$, we have included the case of links in $S^3$.

A number of articles have recently appeared on the relation of smooth ribbon concordance to Heegaard-Floer and Khovanov homology~\cite{Zemke1, Zemke-Levine, Zemke-Miller, Juhasz-Miller-Zemke, Sarkar}.  These techniques of course do not apply to locally flat concordances.  We thought it might be of interest to show how to establish, in many cases and with minimal machinery, that two concordant links are not ribbon concordant, in both categories. 

\begin{remark}
  It is straightforward to apply Theorem~\ref{theorem:alexander-polynomials-divide} to construct examples of concordant knots that are not homotopy ribbon concordant.  For instance (this example was given by Gordon~\cite{Gordon-ribbon-conc}, but with a different proof), let $K$ be a trefoil and let $J$ be the figure eight knot. Then $K\# -K$ and $J \# -J$ are both slice, so are concordant. But the Alexander polynomials are coprime, so there is no homotopy ribbon concordance between these knots.
\end{remark}

\begin{remark} Perhaps somewhat surprisingly, the condition that $\pi_1(X_L) \to \pi_1(X_C)$ is injective is not needed anywhere in our proof of Theorem~\ref{theorem:alexander-polynomials-divide}.

Gordon conjectured that smooth ribbon concordance gives a partial order on knots.  This conjecture is still open: in order to prove it, one would have to show that if $J$ is smoothly ribbon concordant to $K$ and $K$ is smoothly ribbon concordant to $J$, then $K$ and $J$ are isotopic.

In the topological category, by work of Freedman~\cite[Theorem~11.7B]{Freedman-Quinn:1990-1}, there is a concordance $C$ with $\pi_1(X_C)\cong \Z$ from the unknot $U$ to $K$ for every Alexander polynomial one knot $K$.
So in order to make the analogous conjecture that $\geq_{\top}$ is a partial order, one certainly needs that $\pi_1(X_K) \to \pi_1(X_C)$ is injective, and we have included it in the definition. Thus, the concordance $C$ is not a homotopy ribbon concordance. 
\end{remark}

We conclude this introduction with the following conjecture that is the topological analogue of Gordon's Conjecture.

\begin{conjecture}
  The relation $\geq_{\top}$ is a partial order on the set of knots.
\end{conjecture}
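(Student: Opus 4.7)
The plan is to verify the three axioms of a partial order. Reflexivity is immediate: the product concordance $K \times I \subset S^3 \times I$ realizes $K \geq_{\top} K$, with the identity map on $\pi_1$ of the exterior. Transitivity follows by stacking concordances: given $J \geq_{\top} K$ via $C_1$ and $K \geq_{\top} L$ via $C_2$, one obtains a concordance $C$ from $J$ to $L$ with $X_C = X_{C_1} \cup_{X_K} X_{C_2}$. Surjectivity of $\pi_1(X_J) \to \pi_1(X_C)$ follows by composing $\pi_1(X_J) \twoheadrightarrow \pi_1(X_{C_1}) \twoheadrightarrow \pi_1(X_C)$ (the second map is surjective because $\pi_1(X_K) \to \pi_1(X_{C_2})$ already is). Injectivity of $\pi_1(X_L) \to \pi_1(X_C)$ is more delicate but should follow from a covering-space analysis in the spirit of Gordon's original argument.

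The substantive content is antisymmetry: from $J \geq_{\top} K$ via $C$ and $K \geq_{\top} J$ via $C'$, one must conclude that $J$ and $K$ are ambient isotopic. Theorem~\ref{theorem:alexander-polynomials-divide} immediately gives $\Delta_J \doteq \Delta_K$, and the anticipated twisted and higher-order extensions mentioned by the authors would sharpen this considerably. The strategy I would pursue is to assemble the induced maps into a zig-zag
\[\pi_1(X_J) \twoheadrightarrow \pi_1(X_C) \hookleftarrow \pi_1(X_K) \twoheadrightarrow \pi_1(X_{C'}) \hookleftarrow \pi_1(X_J),\]
exploit that knot groups are residually finite and hence Hopfian, and try to argue that each surjection must in fact be an isomorphism. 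The resulting isomorphism $\pi_1(X_J) \cong \pi_1(X_K)$ could then be upgraded to an ambient isotopy by checking compatibility with the peripheral systems and invoking Waldhausen's theorem on knot exteriors.

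The main obstacle is forcing the surjections to be isomorphisms. In the smooth category, the handle decomposition of $X_C$ relative to $X_J$ with only $2$- and $3$-handles gives fine control on $\pi_1(X_C)$ and underlies residual-finiteness attacks on Gordon's original conjecture. In the topological category no such handle structure is available, and the only constraint on $\pi_1(X_C)$ comes from the surjection from $\pi_1(X_J)$, which by itself seems too weak. A topological proof would presumably require either a surgery-theoretic substitute built on Freedman's disc theorem, or an entirely different route that bypasses $\pi_1(X_C)$ and works directly with algebraic invariants refined enough to detect knot type. Bridging the gap from matching algebraic invariants to an actual isotopy is, in my view, the essential difficulty, and the reason the topological conjecture is likely no easier than its smooth counterpart.
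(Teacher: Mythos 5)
You have not proved this statement, and neither does the paper: it is stated there as a \emph{conjecture}, the topological analogue of Gordon's conjecture, and the paper offers no argument for it beyond the divisibility obstruction of Theorem~\ref{theorem:alexander-polynomials-divide}. Your text is a research plan whose decisive steps are exactly the open ones. Concretely: for antisymmetry, the zig-zag $\pi_1(X_J) \twoheadrightarrow \pi_1(X_C) \hookleftarrow \pi_1(X_K) \twoheadrightarrow \pi_1(X_{C'}) \hookleftarrow \pi_1(X_J)$ has its arrows pointing in incompatible directions, so it never produces a self-epimorphism of $\pi_1(X_J)$ (or of $\pi_1(X_K)$) to which the Hopfian property could be applied; residual finiteness of knot groups gives you nothing without such a self-map, and you give no mechanism for forcing the surjections to be isomorphisms. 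This is precisely the difficulty that kept Gordon's smooth conjecture open, and, as you note yourself, the topological setting lacks even the handle-theoretic control available there, so ``likely no easier than its smooth counterpart'' is an admission that the proof is absent, not a step in one.

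There is also a gap earlier than antisymmetry. In your transitivity argument, surjectivity of $\pi_1(X_J)\to\pi_1(X_C)$ is fine by van Kampen, but injectivity of $\pi_1(X_L)\to\pi_1(X_C)$ is not something you can defer to ``a covering-space analysis in the spirit of Gordon's original argument'': Gordon's Lemma~3.1 uses the smooth ribbon structure, which is exactly what a homotopy ribbon concordance does not provide. Writing $H=\pi_1(X_K)$, $G_i=\pi_1(X_{C_i})$, with $\alpha\colon H\hookrightarrow G_1$ and $\beta\colon H\twoheadrightarrow G_2$ having kernel $N$, van Kampen gives $\pi_1(X_C)\cong G_1/\langle\langle \alpha(N)\rangle\rangle$, and the map $G_2\cong H/N \to G_1/\langle\langle\alpha(N)\rangle\rangle$ is injective only if $\alpha^{-1}\bigl(\langle\langle\alpha(N)\rangle\rangle\bigr)=N$, which is not automatic. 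So even transitivity of $\geq_{\top}$ requires an argument you have not supplied. In short, the statement remains a conjecture, and your proposal correctly identifies the obstacles without overcoming them.
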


\subsection*{Acknowledgements}
We would like to thank Arunima Ray and the Max Planck Institute for Mathematics in Bonn. We also thank our first anonymous referee for providing the impetus to include the case of links and we would like to thank our second referee for a thoughtful referee report.
SF was supported by the SFB 1085 ``higher invariants'' which is supported by the Deutsche Forschungsgemeinschaft DFG.

\section{Twisted homology and cohomology}
As preparation for the proofs in the following section we recall the definitions of twisted (co-) homology modules.

Given a group $\pi$ and a left $\Z\pi$-module $A$, we
write $\ol{A}$ for the right $\Z\pi$-module that has the same underlying abelian group but for which the right action of $\Z\pi$ is defined by $a\cdot g:=g^{-1}\cdot a$ for $a\in A$ and $g\in \pi$.  The same notation is also used with the r\^{o}les of left and right reversed and $g \cdot a := a \cdot g^{-1}$.
Here is the definition of twisted homology and cohomology groups.

\begin{definition}
Let $X$ be a connected topological space that admits a universal cover $p\colon \wt{X}\to X$.
Write $\pi:=\pi_1(X)$.
Let $Y$ be a subset of $X$ and let $A$ be a right $\Z\pi$-module.
Let $\pi$ act on $\wt{X}$ by deck transformations, which is naturally a left action. Thus, the singular chain complex  $C_*(\wt{X},p^{-1}(Y))$ becomes a left $\Z\pi$-module chain complex.
Define the \emph{twisted chain complex}
\[ C_*(X,Y;A)\,\,:=\,\,\big( A\otimes_{\Z\pi} C_*(\wt X, p^{-1}(Y)),\Id\otimes \partial_*\big).\]
The corresponding \emph{twisted homology groups} are $H_k(X,Y;A)$.
With~$\delta^k = \Hom(\partial_k,\Id)$ define the \emph{twisted cochain complex} to be
\[ C^*(X,Y;A)\,\,:=\,\,\big(\Hom_{\opnormal{right-}\Z\pi}\big(\ol{C_*(\wt{X},p^{-1}(Y)}), A\big), \delta^*\big).\]
The corresponding \emph{twisted cohomology groups} are $H^k(X,Y;A)$.
\end{definition}

If $R$ is some ring and $A$ is an $(R,\Z\pi)$-bimodule, then the above twisted homology and cohomology groups are naturally left $R$-modules.

In this article, $X$ will be one of $X_J$, $X_L$, or $X_C$, and we will have $A= \Z[\Z^m]$, considered as a $(\Z[\Z^m],\Z\pi)$-bimodule, with the left action by left multiplication and with the right $\Z\pi$ action induced by the homomorphism
\[\pi=\pi_1(X) \to H_1(X;\Z) \xrightarrow{\cong} \Z^m.\]
Here the first map is the Hurewicz map and the isomorphism is determined by the orientations and the ordering of the link components.
We refer to the $\Z[\Z^m]$-modules $H_1(X_B;\Z[\Z^m])$, for $B \in \{J,L,C\}$, as the \emph{Alexander module} of $J$, $L$, and $C$ respectively.
We shall also make use of the analogous twisted homology and cohomology modules of the pairs $(X_C,X_J)$ and $(X_C,X_L)$.

\section{Injection and surjection of Alexander modules}
In this section we will prove several results on the interplay between Alexander modules and homotopy ribbon concordance. The combination of these results will imply
Theorem~\ref{theorem:alexander-polynomials-divide}.

\begin{proposition}\label{prop:surjection}
If $C$ is a homotopy ribbon concordance from $J$ to $L$, then the induced map
  \[H_1(X_J;\Z[\Z^m]) \to H_1(X_C;\Z[\Z^m]) \]
  is surjective.
 \end{proposition}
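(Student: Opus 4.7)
The plan is to translate the statement into elementary group theory. For $B \in \{J, C\}$, let $\phi_B\colon \pi_1(X_B) \to \Z^m$ be the canonical homomorphism appearing in the definition of the Alexander module, and set $N_B := \ker \phi_B$. Writing $\ol{X}_B \to X_B$ for the regular cover corresponding to $N_B$, with deck group $\Z^m$, a standard change of rings gives identifications
\[
H_1(X_B;\Z[\Z^m]) \;\cong\; H_1(\ol X_B;\Z) \;\cong\; N_B^{\mathrm{ab}}
\]
of left $\Z[\Z^m]$-modules. Under this identification, the map induced by the inclusion $X_J \hookrightarrow X_C$ becomes the map $N_J^{\mathrm{ab}} \to N_C^{\mathrm{ab}}$ induced by restricting the homomorphism $\iota \colon \pi_1(X_J) \to \pi_1(X_C)$ to $N_J$; this restriction lands in $N_C$ because $\phi_J = \phi_C \circ \iota$, which in turn holds because the inclusion sends meridians of $J$ to meridians of $C$.

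With this translation in place, the proposition becomes a one-line diagram chase. By the homotopy ribbon hypothesis $\iota$ is surjective; given $n \in N_C$, any preimage $\wt n \in \pi_1(X_J)$ satisfies $\phi_J(\wt n) = \phi_C(\iota(\wt n)) = \phi_C(n) = 0$, so $\wt n \in N_J$. Hence the restriction $N_J \to N_C$ is surjective, and therefore so is the induced map on abelianizations. The only point that needs real care is the functoriality of the identification above, which can be arranged cleanly by noting that, because $\iota$ is surjective, the preimage of $X_J$ inside $\ol X_C$ is connected and equals $\ol X_J$, so that $\ol X_J \hookrightarrow \ol X_C$ induces precisely the restriction $N_J \to N_C$ on fundamental groups.
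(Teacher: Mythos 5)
Your argument is correct and is essentially the paper's own first proof: identify $H_1(X_B;\Z[\Z^m])$ with the abelianization of $\ker(\pi_1(X_B)\to\Z^m)$ via the universal abelian cover (Shapiro's lemma), observe that surjectivity of $\pi_1(X_J)\to\pi_1(X_C)$ together with compatibility of the maps to $\Z^m$ gives surjectivity on these kernels, and hence on abelianizations. (The paper also records a second, homological-algebra proof via the pair $(X_C,X_J)$ and a universal coefficient spectral sequence, but your route matches its first proof.)
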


\begin{proof}[First proof of Proposition~\ref{prop:surjection}]
  Consider the following commutative diagram
\[ \xymatrix@R0.65cm{ 1\ar[r] & K_J:=\ker(\pi_1(X_J)\to \Z^m)\ar[d]\ar[r]& \pi_1(X_J)\ar[r]\ar@{->>}[d] & \Z^m\ar[r]\ar[d]^=&0\\
 1\ar[r] & K_C:=\ker(\pi_1(X_C)\to \Z^m)\ar[r]& \pi_1(X_C)\ar[r] & \Z^m\ar[r]&0.}\]
Since the middle map is an epimorphism we see that map on the left is an epimorphism.  For any group epimorphism $G \to H$, the induced map on abelianisations $G^{ab} \to H^{ab}$ is an epimorphism, so in particular the induced map $K_J^{ab} \to K_C^{ab}$ is an epimorphism.  Note that $K_J$ and $K_C$ are the fundamental groups of the universal abelian covering spaces $\ol{X}_J$ and $\ol{X}_C$ of $X_J$ and $X_C$ respectively.  The Hurewicz theorem identifies the abelianisation of the fundamental group of a path connected space with the first homology, so that
\[\xymatrix@R0.65cm{K_J^{ab} \ar[r] \ar[d]^{\cong} & K_C^{ab} \ar[d]^{\cong} \\
 H_1(\ol{X}_J;\Z) \ar[r] & H_1(\ol{X}_C;\Z) }\] commutes. It follows that the map on the bottom row is an epimorphism.
But by the topologists' Shapiro lemma~\cite[p.~100]{Davis-Kirk} the homology groups $H_1(\ol{X}_J;\Z)$ and $H_1(\ol{X}_C;\Z)$ are naturally isomorphic to the twisted homology groups $H_1(X_J;\Z[\Z^m])$ and $H_1(X_C;\Z[\Z^m])$ respectively.
\end{proof}

Here is another proof using homological algebra, for which generalisation to twisted coefficients would be easier.

\begin{proof}[Second proof of Proposition~\ref{prop:surjection}]
We prove the somewhat stronger statement that $H_1(X_C,X_J;\zzm) =0$.
Consider the long exact sequence of the pair with $\Z\pi:= \Z[\pi_1(X_C)]$ coefficients, where $\pi:= \pi_1(X_C)$:
\begin{align*}
  H_1(X_C;\Z\pi) &\to H_1(X_C,X_J;\Z\pi) \to H_0(X_J;\Z\pi) \\  \to  H_0(X_C;\Z\pi) &\to H_0(X_C,X_J;\Z\pi) \to 0
\end{align*}
Since $\pi= \pi_1(X_C)$, we have $H_1(X_C;\Z\pi)=0$ and $H_0(X_C;\Z\pi) \cong \Z$.
Since $\pi_1(X_J) \to \pi$ is surjective, the pull-back cover
\[\xymatrix@R0.65cm{\ol{X}_J \ar[r] \ar[d] & \wt{X}_C \ar[d] \\
 X_J \ar[r] & X_C, }\]
where $\wt{X}_C \to X_C$ is the universal cover, is precisely the connected cover of $X_J$ corresponding to the subgroup $\ker(\pi_1(X_J) \to \pi_1(X_C))$.
It follows that $H_0(X_J;\Z\pi) \cong \Z$ and the map $H_0(X_J;\Z\pi) \to H_0(X_C;\Z\pi)$ is an isomorphism.  We deduce that
\[H_1(X_C,X_J;\Z\pi)=0 = H_0(X_C,X_J;\Z\pi).\]
Next, apply the universal coefficient spectral sequence for homology (see \cite[Theorem~10.90]{Rotman:2009})
\[\Tor_p^{\zzm}(H_q(X_C,X_J;\Z\pi),\zzm) \, \Rightarrow \, H_{p+q}(X_C,X_J;\zzm).\]
 to change to $\zzm$ coefficients. The terms on the 1-line ($p+q=1$) of the  $E^2$ page are
\[\zzm \otimes_{\Z\pi} H_1(X_C,X_J;\Z\pi) =0\text{ and } \Tor_1^{\Z\pi}(H_0(X_C,X_J;\Z\pi),\zzm) =0.\]
It follows that the 1-line on the $E^{\infty}$ page vanishes too, so that $H_1(X_C,X_J;\zzm)=0$ as desired. This completes the proof of
the proposition.
\end{proof}

We continue with the following variation on Proposition~\ref{prop:surjection}.

\begin{proposition}\label{prop:surjection-torsion}
If $C$ is a homotopy ribbon concordance from $J$  to $L$, then the induced map
  \[TH_1(X_J;\Z[\Z^m]) \to TH_1(X_C;\Z[\Z^m]) \]
between the $\Z[\Z^m]$-torsion submodules is surjective.
 \end{proposition}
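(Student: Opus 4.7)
The plan is to reduce, via an elementary algebraic observation, to showing that $\ker\bigl(f\colon H_1(X_J;\L)\twoheadrightarrow H_1(X_C;\L)\bigr)$ is a $\L$-torsion module, and then to establish this using Poincar\'e--Lefschetz duality on $X_C$ together with the vanishings provided by Proposition~\ref{prop:surjection}. The algebraic reduction runs as follows: if $g\colon M\twoheadrightarrow N$ is a surjection of $\L$-modules whose kernel is $\L$-torsion, then $g$ restricts to a surjection on $\L$-torsion submodules. Indeed, given $y$ torsion in $N$ with $\lambda y=0$ for a non-zerodivisor $\lambda\in\L$, any preimage $x\in M$ satisfies $\lambda x\in\ker g$, and torsion-ness of $\ker g$ yields $\mu\lambda x=0$ for some non-zerodivisor $\mu$; hence $x$ is torsion in $M$ and $y=g(x)$ lies in the image of the torsion submodule.

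By the long exact sequence of the pair $(X_C,X_J)$ with $\L$-coefficients and the vanishing $H_1(X_C,X_J;\L)=0$ of Proposition~\ref{prop:surjection}, I would identify $\ker f = \partial\bigl(H_2(X_C,X_J;\L)\bigr)$. Since any $\L$-linear map sends torsion to torsion, it suffices to show that $H_2(X_C,X_J;\L)$ is itself $\L$-torsion, i.e., that $H_2(X_C,X_J;Q)=0$ where $Q=\Q(\Z^m)$ is the field of fractions of $\L$. For this I would invoke Poincar\'e--Lefschetz duality on the $4$-manifold $X_C$: writing $Y:=\ol{\partial X_C\sm X_J}$ (which is homotopy equivalent to $X_L$), PLD yields $H_2(X_C,X_J;\L)\cong H^2(X_C,Y;\ol\L)$. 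The $\Z\pi$-vanishings $H_0(X_C,X_J;\Z\pi) = H_1(X_C,X_J;\Z\pi)=0$ from the second proof of Proposition~\ref{prop:surjection}, combined with the universal coefficient spectral sequence, give $H^0(X_C,X_J;A) = H^1(X_C,X_J;A)=0$ for any coefficient module $A$; PLD then transfers this to $H_3(X_C,Y;\L) = H_4(X_C,Y;\L)=0$.

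The remaining task is to conclude $\dim_Q H_2(X_C,X_J;Q)=0$. I would do this by combining the long exact sequences of the pairs $(X_C,X_J)$ and $(X_C,Y)$ tensored with $Q$, the Euler characteristic identities $\chi(X_J)=\chi(X_C)=0$, the vanishing $H_*(\partial X_J;Q)=0$ coming from the nontrivial meridional action on the boundary tori, and the PLD dim-equalities $\dim_Q H_k(X_C,X_J;Q) = \dim_Q H_{4-k}(X_C,Y;Q)$. I expect this final dimensional bookkeeping to be the main obstacle: the asymmetry between the $X_J$ side (where $\pi_1$-surjectivity is available) and the $X_L$ side (where the paper deliberately avoids any injectivity hypothesis on $\pi_1(X_L)\to\pi_1(X_C)$) must be carefully handled in order to force $\dim_Q H_2(X_C,X_J;Q)=0$.
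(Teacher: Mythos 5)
Your opening reduction is fine: for a surjection of modules over the domain $\Z[\Z^m]$ whose kernel is torsion, torsion elements downstairs lift to torsion elements upstairs, so everything hinges on showing that $\ker\bigl(H_1(X_J;\Z[\Z^m])\to H_1(X_C;\Z[\Z^m])\bigr)$ is torsion, equivalently (as you say) that $H_2(X_C,X_J;\Q(\Z^m))=0$, or just that $H_1(X_J;\Q(\Z^m))\to H_1(X_C;\Q(\Z^m))$ is injective. But this is exactly the step you do not prove, and the strategy you propose for it --- Poincar\'e--Lefschetz duality, Euler characteristics, and the vanishings $H_0=H_1(X_C,X_J;\Z[\Z^m])=0$, $H_*(\partial X_J;\Q(\Z^m))=0$ --- cannot close the gap. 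Writing $a_k=\dim_{\Q(\Z^m)} H_k(X_C,X_J;\Q(\Z^m))$ and $b_k=\dim_{\Q(\Z^m)} H_k(X_C,X_L;\Q(\Z^m))$, all of your listed constraints (duality $a_k=b_{4-k}$, $\chi=0$ for both pairs, $a_0=a_1=0$, $b_3=b_4=0$, boundary-tori vanishing, the long exact sequences of both pairs and of $(X_C,\partial X_C)$) only yield $a_2=a_3=b_1=b_2$; they are simultaneously satisfiable with $a_2=d>0$, for instance with $\dim H_1(X_C;\Q(\Z^m))=\dim H_2(X_C;\Q(\Z^m))=d$, $\dim H_1(X_J;\Q(\Z^m))=\dim H_2(X_J;\Q(\Z^m))=2d$ and $H_*(X_L;\Q(\Z^m))=0$. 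So no amount of this dimensional bookkeeping forces $d=0$; an input genuinely relating $\Z$-homology to $\Q(\Z^m)$-homology is required.

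That missing input is precisely how the paper proceeds: since $C$ is a concordance, the inclusion $X_J\hookrightarrow X_C$ is a $\Z$-homology equivalence, so $H_i(X_C,X_J;\Z)=0$ for all $i$, and chain homotopy lifting (Cochran--Orr--Teichner, Proposition~2.10; alternatively a Milnor-style argument that each $t_i-1$ acts invertibly on the relative homology of a $\Z$-acyclic finite free $\Z[\Z^m]$-complex) upgrades this to $H_i(X_C,X_J;\Q(\Z^m))=0$ for all $i$. With that vanishing in hand, your algebraic reduction does finish the argument, and it is then essentially the paper's own proof, which runs the same comparison via the diagram of exact sequences $0\to TH_1\to H_1\to H_1(-;\Q(\Z^m))$ for $X_J$ and $X_C$ and a short diagram chase. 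Note also that no duality is needed at this stage; the paper reserves Poincar\'e--Lefschetz duality for the injectivity statement on the $X_L$ side (Proposition~\ref{prop:alex-polys-divide-injective}).
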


\begin{proof}
  First, the fact that $X_J \to X_C$ induces a $\Z$-homology isomorphism implies that $H_i(X_C,X_J;\Z)=0$ for all $i$. By chain homotopy lifting~\cite[Proposition~2.10]{Cochran-Orr-Teichner:1999-1} this implies that \[H_i(X_C,X_J;\Q(\Z^m)) =0\] for all $i$.
This in turn implies that the right vertical map in the next commutative diagram is an isomorphism.
\[ \xymatrix@R0.65cm{ 0\ar[r] &TH_1(X_J;\Z[\Z^m])\ar[r] \ar[d] & H_1(X_J;\Z[\Z^m])\ar@{->>}[d]\ar[r] & H_1(X_J;\Q(\Z^m))\ar[d]^\cong \\
 0\ar[r] &TH_1(X_C;\Z[\Z^m])\ar[r] & H_1(X_C;\Z[\Z^m])\ar[r] & H_1(X_C;\Q(\Z^m)) }\]
Since $\Q(\Z^m)$ is flat over $\Z[\Z^m]$, the horizontal sequences are exact. By Proposition~\ref{prop:surjection} the middle map is an epimorphism.
A straightforward diagram chase shows that the left vertical map is also an epimorphism.
\end{proof}

The following corollary is an immediate consequence of
Proposition~\ref{prop:surjection-torsion} and of the multiplicativity of orders in short exact sequences of torsion $\zzm$-modules~\cite[Lemma~5]{Levine67}.

\begin{corollary}\label{cor:alex-polys-divide-surjective}
The orders of the torsion submodules of the homologies satisfy:
%\[\ord H_1(X_C;\Z[t^{\pm}]) \mid \ord H_1(X_J;\Z[t^{\pm}]) = \Delta_J.\]
%++++ I would prefer
\[\ord TH_1(X_C;\Z[\Z^m]) \mid \underset{=\Delta_J}{\underbrace{\ord TH_1(X_J;\Z[\Z^m])}}.\]
  \end{corollary}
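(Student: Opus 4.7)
The plan is to extract the corollary directly from Proposition~\ref{prop:surjection-torsion} by invoking multiplicativity of orders. Let $\phi \colon TH_1(X_J;\zzm) \twoheadrightarrow TH_1(X_C;\zzm)$ denote the surjection supplied by Proposition~\ref{prop:surjection-torsion}, and set $K := \ker \phi$. Since $K$ is a submodule of the torsion module $TH_1(X_J;\zzm)$, it is itself a torsion $\zzm$-module, so we obtain a short exact sequence
\[ 0 \to K \to TH_1(X_J;\zzm) \xrightarrow{\phi} TH_1(X_C;\zzm) \to 0 \]
of torsion $\zzm$-modules.

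Next I would apply the multiplicativity of orders in short exact sequences of torsion $\zzm$-modules, as recorded in Levine~\cite[Lemma~5]{Levine67}, to conclude
\[ \ord TH_1(X_J;\zzm) \,\doteq\, \ord K \cdot \ord TH_1(X_C;\zzm), \]
where $\doteq$ denotes equality up to a unit in $\zzm$. In particular $\ord TH_1(X_C;\zzm)$ divides $\ord TH_1(X_J;\zzm)$. Since by definition $\Delta_J = \ord TH_1(X_J;\zzm)$ (the Alexander polynomial was set up as the order of the torsion submodule of the Alexander module), this is exactly the claim.

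There is essentially no obstacle here beyond correctly packaging the surjection into a short exact sequence of torsion modules; the only small point to check is that the kernel of a surjection between torsion $\zzm$-modules is again torsion, which is immediate since it is a submodule of a torsion module. Everything else is a direct appeal to Proposition~\ref{prop:surjection-torsion} and to Levine's lemma.
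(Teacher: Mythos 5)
Your argument is correct and is exactly the one the paper intends: the corollary is stated as an immediate consequence of Proposition~\ref{prop:surjection-torsion} together with the multiplicativity of orders in short exact sequences of torsion $\zzm$-modules from Levine's Lemma~5, which is precisely how you package the surjection with its (automatically torsion) kernel.
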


We continue with the following proposition that relates the Alexander modules of $J$ and $C$.

\begin{proposition}\label{prop:alex-polys-divide-injective}
If $C$ is a homotopy ribbon concordance from $J$ to $L$, then the induced map
  \[H_1(X_L;\Z[\Z^m]) \to H_1(X_C;\Z[\Z^m]) \]
  is injective.
\end{proposition}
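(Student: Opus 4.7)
The plan is to prove the stronger vanishing $H_2(X_C, X_L; \zzm) = 0$; the injectivity of $H_1(X_L;\zzm) \to H_1(X_C;\zzm)$ then follows immediately from the long exact sequence of the pair $(X_C,X_L)$. I will deduce this vanishing by showing that $H_2(X_C, X_L; \zzm)$ is simultaneously $\zzm$-torsion-free and annihilated after tensoring with the quotient field $Q := \Q(\Z^m)$.

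For torsion-freeness, I will apply Poincar\'e--Lefschetz duality. The boundary of the compact $4$-manifold $X_C$ decomposes as $\partial X_C = X_J \cup \partial_s X_C \cup X_L$, where $\partial_s X_C \cong \bigsqcup^m T^2 \times I$ is the ``side'' portion coming from the tubular neighbourhood of $C$. Duality gives
\[H_2(X_C, X_L; \zzm) \cong H^2(X_C,\, X_J \cup \partial_s X_C;\, \zzm),\]
and since $\partial_s X_C$ deformation retracts onto $\partial X_J \subset X_J$, the right-hand side simplifies to $H^2(X_C, X_J; \zzm)$. I would then feed the vanishing $H_0(X_C, X_J; \zzm) = H_1(X_C, X_J; \zzm) = 0$ established in the second proof of Proposition~\ref{prop:surjection} into the cohomology universal coefficient spectral sequence $\Ext^p_{\zzm}(H_q(X_C, X_J;\zzm), \zzm) \Rightarrow H^{p+q}(X_C, X_J; \zzm)$, collapsing the total-degree-$2$ line to the single surviving term
\[H^2(X_C,X_J;\zzm) \cong \Hom_{\zzm}(H_2(X_C, X_J; \zzm), \zzm).\]
Since $\zzm$ is a domain, any $\Hom$ into $\zzm$ is torsion-free, and so $H_2(X_C, X_L; \zzm)$ is torsion-free.

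For the rational vanishing, the inclusion $X_L \hookrightarrow X_C$ is a $\Z$-homology isomorphism (standard for concordance exteriors: both have the $\Z$-homology of $\bigvee^m S^1$ with matching meridians), so $H_*(X_C, X_L; \Z) = 0$. The chain homotopy lifting argument of~\cite[Proposition~2.10]{Cochran-Orr-Teichner:1999-1} --- used exactly as in the proof of Proposition~\ref{prop:surjection-torsion} --- then upgrades this to $H_*(X_C, X_L; Q) = 0$. In particular $H_2(X_C, X_L; \zzm) \otimes_{\zzm} Q = 0$, so every element of $H_2(X_C, X_L; \zzm)$ is $\zzm$-torsion. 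Combined with torsion-freeness, $H_2(X_C, X_L; \zzm) = 0$, and the long exact sequence yields the desired injectivity.

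The main obstacle is the Poincar\'e--Lefschetz duality step, which reflects the question from the ``far end'' $X_L$ over to the ``near end'' $X_J$, where the $\pi_1$-surjectivity of a homotopy ribbon concordance has already been exploited in Proposition~\ref{prop:surjection}. Once the side piece $\partial_s X_C$ is correctly identified and absorbed into $X_J$ up to homotopy, the rest is routine homological algebra. Notably this argument never uses the injectivity of $\pi_1(X_L) \to \pi_1(X_C)$, consistent with the remark above that only $\pi_1$-surjectivity of the ``near end'' is needed for Theorem~\ref{theorem:alexander-polynomials-divide}.
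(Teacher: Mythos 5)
Your proposal is correct and takes essentially the same route as the paper: Poincar\'e--Lefschetz duality converting $H_2(X_C,X_L;\zzm)$ into $H^2(X_C,X_J;\zzm)$ (the paper suppresses the side piece $\partial_s X_C$ that you spell out), the universal coefficient spectral sequence fed with the vanishing of $H_0$ and $H_1$ of $(X_C,X_J)$ from Proposition~\ref{prop:surjection}, and chain homotopy lifting~\cite{Cochran-Orr-Teichner:1999-1} for the $\Q(\Z^m)$-coefficient vanishing; the only (harmless) difference is at the very end, where the paper kills the term $\Hom_{\zzm}(H_2(X_C,X_J;\zzm),\zzm)$ outright because $H_2(X_C,X_J;\zzm)$ is itself torsion, while you keep it as a torsion-free upper bound and intersect with the torsion statement on the $(X_C,X_L)$ side. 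One small inaccuracy worth fixing: for $m\geq 2$ the exteriors do not have the $\Z$-homology of $\bigvee^m S^1$ (they have $H_2\cong\Z^{m-1}$), but the fact you actually use --- that $X_L\hookrightarrow X_C$ induces an isomorphism on $\Z$-homology --- is true and is exactly the standard fact the paper also invokes without proof.
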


In the proof of  Proposition~\ref{prop:alex-polys-divide-injective}  we shall make use of the next lemma. The proof of the lemma is a straightforward check and is omitted.

\begin{lemma}\label{lem:switch-to-lambda-chain-complex}
Let $\pi$ be a group, let $C_*$ be a chain complex of free left $\Z[\pi]$-modules and let $\phi\colon \pi\to \Z^m$ be a homomorphism. The map $\phi$ induces a $(\Z[\Z^m],\Z[\pi])$-bimodule structure on $\Z[\Z^m]$.
The map
\[ \begin{array}{rcl}  \Hom_{\operatorname{right-}\Z[\pi]}(\ol{C_*};\Z[\Z^m])&\to &
  \ol{\Hom_{\Z[\Z^m]}(\Z[\Z^m]\otimes_{\Z[\pi]} C_*;\Z[\Z^m])}\\
f&\mapsto & (p\otimes \sigma\mapsto p\cdot \ol{f(\sigma)})
\end{array}
\]
is well-defined and is an isomorphism of $\Z[\Z^m]$-cochain complexes.
\end{lemma}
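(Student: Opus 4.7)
The plan is to recognize the map $\Psi: f \mapsto (p\otimes\sigma \mapsto p\cdot\ol{f(\sigma)})$ as the involution-decorated form of the standard tensor-Hom adjunction for the base change $\phi:\Z[\pi]\to\Z[\Z^m]$, namely
\[ \Hom_{\Z[\pi]}(C_*,\Z[\Z^m]) \;\cong\; \Hom_{\Z[\Z^m]}(\Z[\Z^m]\otimes_{\Z[\pi]} C_*,\Z[\Z^m]), \]
and to verify everything by explicit formulas. Concretely, I would introduce the candidate inverse $\Psi^{-1}(g) := \bigl(c\mapsto \ol{g(1\otimes c)}\bigr)$ and then verify four things in turn: (i) both $\Psi$ and $\Psi^{-1}$ land in the claimed targets; (ii) they are mutually inverse; (iii) $\Psi$ is a $\Z[\Z^m]$-module map for the bar-decorated module structures on each side; and (iv) $\Psi$ commutes with the coboundaries.

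For (i), well-definedness of $\Psi(f)$ on the tensor product amounts to the balancing relation $p\phi(h)\otimes c = p\otimes hc$ being preserved, which reduces to the identity $\ol{f(c)\phi(h)^{-1}} = \phi(h)\ol{f(c)}$; this holds because the bar is the group-element involution $g\mapsto g^{-1}$ and $\Z[\Z^m]$ is commutative. Dually, $\Psi^{-1}(g)$ is a right $\Z[\pi]$-module map $\ol{C_*}\to\Z[\Z^m]$ because $1\otimes hc = \phi(h)\otimes c$ in the tensor product and $g$ is $\Z[\Z^m]$-linear in its first slot. For (ii), a one-line computation using $\ol{\ol{x}}=x$ and the $\Z[\Z^m]$-linearity of $g$ gives $\Psi\Psi^{-1}=\Id$ and $\Psi^{-1}\Psi=\Id$.

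For (iv), the coboundaries on both sides are given by precomposition with $\partial$, so the identity $\Psi(\delta f)(p\otimes c) = p\cdot\ol{f(\partial c)} = \Psi(f)(p\otimes \partial c) = \delta\Psi(f)(p\otimes c)$ is immediate. For (iii), chasing a scalar $q\in\Z[\Z^m]$ through both sides shows that $\Psi(qf) = q\cdot \Psi(f)$, where the outer bar converts the right action on the target $\Z[\Z^m]$ of the right-hand Hom into the left action $q\cdot\psi := \psi\cdot\ol{q}$.

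I do not anticipate any real obstacle: the entire content is the classical induction-restriction adjunction, and the ``straightforward check'' remark in the paper is accurate. The only mild delicacy is bookkeeping — keeping track of where the involution $\ol{\,\cdot\,}$ appears, since it occurs in the definition of $\ol{C_*}$, in the outer bar on the right-hand $\Hom$, and in the formula defining $\Psi$ itself. Once the bars are placed correctly, each identity reduces to a one-line consequence of commutativity of $\Z[\Z^m]$ together with the identity $\ol{ab}=\ol{b}\,\ol{a}$.
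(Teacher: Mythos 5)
Your verification is correct and is precisely the ``straightforward check'' that the paper omits: the map is the tensor--Hom adjunction for the base change along $\phi$, decorated with the involution, and each of your four verifications (well-definedness via the balancing relation, the explicit inverse $g\mapsto(c\mapsto\ol{g(1\otimes c)})$, $\Z[\Z^m]$-linearity with the outer bar, and compatibility with $\delta=\Hom(\partial,\Id)$) reduces to commutativity of $\Z[\Z^m]$ and $\ol{ab}=\ol{b}\,\ol{a}$ exactly as you say. Since the paper gives no proof, there is nothing to compare against; your argument fills the gap correctly.
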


\begin{proof}[Proof of Proposition~\ref{prop:alex-polys-divide-injective}]
We show that $H_2(X_C,X_L;\zzm)=0$.
As in the proof of Proposition~\ref{prop:surjection-torsion},
$H_i(X_C,X_L;\Q(\Z^m)) =0$ for all $i$. Since commutative localisation is flat, this implies in particular that $H_i(X_C,X_L;\zzm)$ is $\zzm$-torsion for all $i$.

Now by Poincar\'{e}-Lefschetz duality (see e.g.~\cite[Theorem~A.15]{FNOP-guide} for a proof with twisted coefficients in the topological category),
%\footnote{[SF] technically speaking $\partial X_C=X_L\cup X_L$ together with some annuli. Should we ignore this? It's a little bit of a mess to explain %this.[MP]  I would ignore it. The stated fact on homology groups is still true by homotopy invariance. }
\[H_2(X_C,X_L;\zzm) \cong H^2(X_C,X_J;\zzm).\]

As above write $\pi:= \pi_1(X_C)$.
Now \[H^2(X_C,X_J;\zzm) \cong \overline{H^2(\Hom_{\zzm}(\zzm \otimes_{\Z\pi} C_*(X_C,X_L;\Z\pi), \zzm))}\]
by Lemma~\ref{lem:switch-to-lambda-chain-complex}.
We can compute the right hand side of this using the universal coefficient spectral sequence for cohomology \cite[Theorem~2.3]{Levine-77-knot-modules}, which combined with the equation above gives a spectral sequence %\footnote{[SF]
%as it stands it's not quite true. The right hand side needs to be
%$\overline{H^{p+q}(X_C,X_J;\zzm)}$. Perhaps we should talk about how (not) to apply the UCSS in our 4-manifold paper since this is a never ending source of confusion.}
\[\Ext^p_{\zzm}(H_q(X_C,X_J;\zzm),\zzm) \, \Rightarrow \, \overline{H^{p+q}(X_C,X_J;\zzm)}.\]
We shall show that all the terms on the 2-line ($p+q=2$) vanish.
First, since $H_2(X_C,X_J;\zzm)$ is torsion, we have
\[\Ext^0_{\zzm}(H_2(X_C,X_J;\zzm),\zzm) \cong \Hom_{\zzm}(H_2(X_C,X_J;\zzm),\zzm) =0.\]
We showed in the proof of Proposition~\ref{prop:surjection} that $H_1(X_C,X_J;\zzm)=0$. Therefore \[\Ext^1_{\zzm}(H_1(X_C,X_J;\zzm),\zzm) =0.\] Finally  $H_0(X_C,X_J;\zzm) =0$, so \[\Ext^2_{\zzm}(H_0(X_C,X_J;\zzm),\zzm) =0.\]  This completes the proof that all the terms on the 2-line vanish, so we see that
\[\overline{H_2(X_C,X_L;\zzm)} \cong \overline{H^2(X_C,X_J;\zzm)} =0\]
which implies that $H_2(X_C,X_L;\zzm) =0$ as desired.
It then follows from the long exact sequence of the pair $(X_C,X_L)$ that the map  \[H_1(X_L;\Z[\Z^m]) \to H_1(X_C;\Z[\Z^m]) \]
  is injective.
  \end{proof}

Using the aforementioned  multiplicativity of orders in short exact sequences of torsion $\zzm$-modules we immediately obtain the following corollary.

\begin{corollary}\label{cor:alex-polys-divide-injective}
The orders of the torsion submodules of the homologies satisfy:
\[\underset{=\Delta_L} {\underbrace{\ord TH_1(X_L;\Z[\Z^m])}} \mid \ord TH_1(X_C;\Z[\Z^m]).\]
 \end{corollary}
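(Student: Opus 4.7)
The plan is to combine the injectivity statement of Proposition~\ref{prop:alex-polys-divide-injective} with the cited multiplicativity of orders from \cite[Lemma~5]{Levine67}. Since any module homomorphism carries torsion elements to torsion elements, the injection $H_1(X_L;\zzm) \hookrightarrow H_1(X_C;\zzm)$ provided by Proposition~\ref{prop:alex-polys-divide-injective} restricts to an injection
\[ 0 \to TH_1(X_L;\zzm) \to TH_1(X_C;\zzm). \]

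Next, I would form the short exact sequence of $\zzm$-torsion modules
\[ 0 \to TH_1(X_L;\zzm) \to TH_1(X_C;\zzm) \to Q \to 0, \]
where $Q$ is the cokernel. As a quotient of a torsion $\zzm$-module, $Q$ is itself $\zzm$-torsion, so all three modules are torsion and the order $\ord Q$ is defined. By the multiplicativity of orders in short exact sequences of torsion $\zzm$-modules \cite[Lemma~5]{Levine67}, we obtain
\[ \ord TH_1(X_C;\zzm) \,\doteq\, \ord TH_1(X_L;\zzm) \cdot \ord Q, \]
which yields the desired divisibility $\ord TH_1(X_L;\zzm) \mid \ord TH_1(X_C;\zzm)$, and recalling that $\ord TH_1(X_L;\zzm) = \Delta_L$ by definition completes the argument.

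There is no real obstacle here: the whole content has been done in Proposition~\ref{prop:alex-polys-divide-injective}, and this corollary is a purely formal consequence, entirely parallel in shape to Corollary~\ref{cor:alex-polys-divide-surjective}. The only minor point to be careful about is confirming that $Q$ is torsion (so that Levine's lemma applies), but this is immediate from the fact that a quotient of a torsion module is torsion.
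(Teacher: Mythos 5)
Your proposal is correct and matches the paper's argument: the corollary is deduced exactly as you describe, as an immediate consequence of the injectivity from Proposition~\ref{prop:alex-polys-divide-injective} together with the multiplicativity of orders in short exact sequences of torsion $\zzm$-modules from \cite[Lemma~5]{Levine67}. Your explicit check that the cokernel is again torsion is the only detail the paper leaves implicit, and it is handled correctly.
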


%\begin{proof}[Proof of Theorem~\ref{theorem:alexander-polynomials-divide}]

\section{Proof of Theorem~\ref{theorem:alexander-polynomials-divide}}
    By Corollary~\ref{cor:alex-polys-divide-injective}, we have that  $\Delta_L = \ord  TH_1(X_L;\zzm)$ divides $\Delta_C := \ord TH_1(X_C;\zzm)$. That is, $\Delta_C = \Delta_L \cdot p$ for some $p \in \zzm$.  On the other hand, by Corollary~\ref{cor:alex-polys-divide-surjective}, for some $q \in \zzm$ we have that $\Delta_C \cdot q = \Delta_J$. Therefore \[\Delta_J = \Delta_C \cdot q = \Delta_L \cdot p \cdot q\] and so $\Delta_L \mid \Delta_J$ as desired. This completes the proof of Theorem~\ref{theorem:alexander-polynomials-divide}.
%\end{proof}

\bibliographystyle{alpha}
\def\MR#1{}
\bibliography{research}

\end{document}